\newtheorem{lemma}{Lemma}
\newtheorem{observ}{Observation}
\newcommand{\old}[1]{{}}
\begin{document}

\newcommand{\NP}{\ensuremath{\mathcal{N}\mathcal{P}}\xspace}
\newcommand{\proofsketch}{\noindent\textbf{Proofsketch.}\enspace}
\newcommand{\bt}{\begin{tabbing}}
\newcommand{\et}{\end  {tabbing}}
\newcommand{\be}{\begin{enumerate}}
\newcommand{\ee}{\end  {enumerate}}
\newcommand{\bl}{\hspace*{2mm}}
\newcommand{\Bl}{\hspace*{5mm}}
\newcommand{\BL}{\hspace*{10mm}}
\newcommand{\sq}{\sqrt{2}}
\newcommand{\abs}[1]{{#1}}
\newcommand{\full}[1]{{}}

\def\registered{{\ooalign{\hfil\raise .00ex\hbox{\scriptsize R}\hfil\crcr\mathhexbox20D}}}

\title{Disruption Management with Rescheduling of Trips and Vehicle Circulations }

\old{
\author{Martin Lorek
    \affiliation{
	Department of Computer Science\\
	Braunschweig University of Technology\\
	Braunschweig, D-38106\\
  Germany\\
    Email: m.lorek@tu-bs.de
    }	
}

\author{S\'andor P.\ Fekete 
    \affiliation{
      Department of Computer Science\\
	Braunschweig University of Technology\\
	Braunschweig, D-38106\\
  Germany\\
    Email: s.fekete@tu-bs.de
    }
}

\author{Alexander Kr{\"o}ller     
    \affiliation{
    Department of Computer Science\\
	Braunschweig University of Technology\\
	Braunschweig, D-38106\\
  Germany\\
    Email: a.kroeller@tu-bs.de
    }
}

\author{Marc E.\ Pfetsch       
    \affiliation{Department of Mathematics\\
	Braunschweig University of Technology\\
	Braunschweig, D-38106\\
  Germany\\
    Email: m.pfetsch@tu-bs.de
    }
}
}

\author{
S\'andor P.\ Fekete\thanks{ Department of Computer Science, TU
Braunschweig, D-38106 Braunschweig, Germany, Email:
\{s.fekete,m.lorek,a.kroeller,m.pfetsch\}@tu-bs.de. http://www.cs.ibr.tu-bs.de/alg}
 \and Alexander Kr{\"o}ller  \footnotemark[1] \and Martin Lorek \footnotemark[1] \thanks{Supported by a fellowship from Siemens AG.}
\and Marc Pfetsch 
\thanks{ Department of Mathematics, TU Braunschweig, D-38106 Braunschweig, Germany, Email: m.pfetsch@tu-bs.de.}
       }

\date{}
\maketitle
\markboth{}{}

\begin{abstract}
  This paper introduces a combined approach for the recovery of a
    timetable by rescheduling trips and vehicle circulations for a
    rail-based transportation system subject to disruptions. We propose a
    novel event-based integer programming (IP) model. Features include
    shifting and canceling of trips as well as modifying the vehicle
    schedules by changing or truncating the circulations.  The objective
    maximizes the number of recovered trips, possibly with delay, while
    guaranteeing a conflict-free new timetable for the estimated time
    window of the disruption.
    We demonstrate the usefulness of our approach through experiments for
    real-life test instances of relevant size, arising from the subway
    system of Vienna. We focus on scenarios in which one direction of one
    track is blocked, and trains have to be scheduled through this
    bottleneck. Solving these instances is made possible by contracting
    parts of the underlying event-activity graph; this allows a significant
    size reduction of the IP.
    Usually, the solutions found within one minute are of good quality and can be used as good estimates of recovery plans in an online context.
\end{abstract}

\section{Introduction}
\subsection{Disruption Management in Passenger Traffic}

Mobility of people is of growing importance in modern societies. For
instance, especially in densely populated European cities, public transportation
systems form a relevant economic factor. However, possible disruption of
operations in such systems will always remain unavoidable. Increased
pressure towards economical operation has amplified the impact and the
frequency of such disruptions. Being often optimized to the limit,
transportation systems have become more interconnected, and therefore more
vulnerable to disruptions.


This makes it indispensable to develop more advanced methods for disruption
management of passenger traffic. Railway disruption management is defined
in Jespersen-Groth et al. \cite{groth09} as the process of finding a new
timetable by rerouting, delaying, or canceling trains and rescheduling the
resources like rolling stock and the crew such that the new timetable is
feasible with respect to the new schedules. This has to be performed while
the effects of disruptions are still unfolding, that is, disruption
management is an online problem.

The recovery of the timetable, the vehicle schedule, and the crew schedule
is usually performed sequentially.  While previous work has focused on
either delay management (i.e., making sure that the delay of passengers and
the inconvenience caused by missed trains is kept low) or on deriving new
circulations on a given dispatching timetable, a particular aspect that has
yet to be considered from a scientific side is to combine rescheduling of
trips and vehicle circulations to obtain a feasible dispatching timetable.
The goal is to perform as many trips of the original timetable as possible.
Clearly, this is an important next step when further advancing optimization
towards real-time methods for stabilizing rail-based transportation
systems.


\subsection{Related Work}

The challenges and problems connected with passenger railway transportation
have been intensively studied for the last decades, and operations research
methods have been successfully applied. For example, Liebchen investigated
the optimization of periodic timetables~\cite{Lie06}. However, in daily
operation, unforeseen events occur and may lead to disruptions of the
timetable. Different aspects of this problem have been considered, and
various approaches and models exist. A general framework for the problems
and solution methods in disruption management can be found
in~\cite{groth09}.  The problem we investigate in this paper is closely
related to the {\em delay management problem}. Mixed-integer approaches for
it are introduced by Sch\"obel~\cite{schoebel01,schoebel07} and extended by
capacity constraints for the tracks~\cite{schaschoe08}.
Schachtebeck~\cite{schachtebeck09} adds circulation constraints to the
model and performs a matching at the depots to prevent a large delay from
spreading into the next circulation activity. The computational complexity
of the original delay management problem is investigated
in~\cite{gatto_et_al05}: it turns out to be NP-hard, even in special cases,
see also~\cite{CapraraFischettiToth2002}. Simulation studies can be found
in~\cite{hofman_et_al06,suhl04,gely09}. T\"ornquist~\cite{trnquist06} gives
an overview over the existing models and decision support systems. He also
investigates the disturbance management on $n$-tracks~\cite{trnquist07}.
Gatto and Widmayer consider related problems as an online job shop
model~\cite{gatwid07}. A decision support system for rolling stock
rescheduling during disruption, where the rolling stock is balanced
according to a given dispatching timetable, is provided by
Nielsen~\cite{nielsen08}.  

In this paper, we focus on scenarios in which one direction of a track
network has to be shutdown, such that trains have to be scheduled through a
bottleneck. This leads to the track allocation problem studied by
Bornd\"orfer and Schlechte~\cite{BorndoerferSchlechte07}, which is shown to
be NP-hard in~\cite{CapraraFischettiToth2002}. A column generation approach
to timetabling trains though a corridor is presented by Cacchiani et
al.~\cite{CacchianiCapraraToth2008}.  Huisman et al.\ developed column
generation approaches to reschedule rail crew~\cite{Huisman07} and solved
the combined crew rescheduling problem with retiming of trips by allowing
small changes to the train timetables~\cite{HuismanVPK09}. An overview of
models and methods for similar problems that arise in the airline context
can be found in Clausen~\cite{Clausen10}.

None of the above papers considers rescheduling of trips and circulations
as an integral part of deriving a feasible dispatching timetable. Clearly,
we can imagine real-life scenarios that require an integrated approach for
disruption management, e.g., in an online situation with trains already
circulating in the rail network and an infeasible timetable due to heavy
disruptions.


\subsection{Our Contribution}

In this paper, we introduce a combined approach for re\-scheduling trips
and vehicle circulations in order to deal with unplanned disruptions. We
derive a new re-optimized dispatching timetable with respect to a feasible
new vehicle schedule that performs as much of the original trips as
possible. Especially in subway systems, the utilization of the system is
high. The high frequency of trains circulating in the system, the limited
possibilities for overtaking, swapping tracks, and temporarily parking, as
well as the bottleneck sections caused by disruptions force us to add
stronger resource restrictions and integrate further possibilities into the
model in order to obtain a feasible dispatching timetable.  Our main
mathematical contributions are an appropriate model based on integer
programming that combines rescheduling of trips and circulations, as well
as a mathematical contraction technique for simplifying the model. On the
practical side, we are able to demonstrate the usefulness of this
mathematical optimization approach by providing optimal and near-optimal
solutions for a system of real-life instances that are based on the subway
system of Vienna.

The rest of this paper is organized as follows. In the following section,
we describe detailed aspects of disruptions in the operation of rail-based
transportation. This leads to an optimization model based on integer linear
programming.  Mathematical aspects of solving real-life instances are
discussed, and computational results are presented before we conclude with a
discussion of future work.

\section{Disruption in Public Transport}

Disturbances in the daily operation of public passenger transport cannot be
avoided. This can include delays caused by longer waiting times, because an
unexpected number of passengers enter or leave the train, a driver arriving
late at a relief point, or a technical problem with the mechanism of an
automatic door.  These disruptions lead to small delays, which usually can
be absorbed by the buffer times integrated into the timetable. If bigger
disruptions occur, e.g., a vehicle breaks down or a part of the track
requires unplanned work, this causes a temporary shutdown on a track
section.  Such disruptions lead to severe violations of the timetable,
which have to be resolved by the dispatchers.


As an illustrating example, consider a blocked track section on a $2$-track
network. A possible option includes rerouting trips, which originally use
the blocked track, via the opposite track. This leads to a large number of
conflicts on that track section, because trains driving into opposite
directions have to share the same track resources and compete for free time
slots to pass trough this bottleneck. Consider two trains that are
scheduled to enter the shared track section in opposite directions at the
same time. Assume that the minimum transit time is 5 minutes and the
minimum safety time between two trains using the same track in opposite
direction is one minute. If the cycle time is 5 minutes and we allow a
maximum delay of 5 minutes, it is not possible to schedule both trains
through the bottleneck.

Depending on the length of the bottleneck and the frequency of the
railway system, updating the timetable by delaying trips will not suffice. If 
trains queue up in front of the bottleneck, passenger travel time will increase
dramatically. 
%
More precisely, the dispatcher has different possibilities for dealing with
this situation. The timetabling perspective includes that
\begin{enumerate} 
\item trips can be delayed within a certain time window and
\item trips can be canceled.
\end{enumerate}
In addition rescheduling of the vehicle circulations allows that
\begin{enumerate}
\item trains can be instructed to truncate their circulation and turn
  early, in order to serve trips initially planned for a different train,
\item trains can be used to shuttle inside the single-track sections,
\item trains can return early to a depot, and
\item replacement vehicles can be used. 
\end{enumerate}

Usually, timetabling and vehicle scheduling are planned sequentially.
Recently, research has focused more and more on integrating different steps of
the planning process, as proposed in~\cite{Lie08}. If heavy disruptions
occur, these two aspects are strongly interweaved and need to be handled at
the same time.

In the following, we present a model based on integer linear programming
that finds a feasible disposition timetable and new vehicle schedules. It
respects the capacity constraints of the tracks and allows dropping trips
and early turns. The goal is to return to the initial timetable within a
certain time horizon.


\def\Edep{\mathcal{E}_{\text{dep}}}
\def\Earr{\mathcal{E}_{\text{arr}}}
\def\Athead{\mathcal{A}_{\text{t-head}}}
\def\Ashead{\mathcal{A}_{\text{s-head}}}
\def\Adrive{\mathcal{A}_{\text{drive}}}
\def\Await{\mathcal{A}_{\text{wait}}}
\def\Atrain{\mathcal{A}_{\text{train}}}
\def\Areturn{\mathcal{A}_{\text{return}}}
\def\Astation{\mathcal{A}_{\text{station}}}
\def\Edepot{\mathcal{E}_{\text{depot}}}
\def\Erdepot{\mathcal{E}_{\text{repl}}}
\def\Aturn{\mathcal{A}_{\text{turn}}}
\def\Atrack{\mathcal{A}_{\text{track}}}
\def\Ahead{\mathcal{A}_{\text{head}}}

%

\section{Our Model}
\subsection{A Graph Representation}
In the following we consider a rail network $G=(\mathcal{S},\mathcal{J})$ with $\mathcal{S}$ and $\mathcal{J}$ representing the set of stations and tracks and a given dispatching timetable $\pi$ that includes the schedule for each vehicle in $\mathcal{V}$.
Our model for disruption management 
with rescheduling of trips and vehicle circulations
is based on the widely used concept of an event-activity network. This structure was suggested
by Serafini and Ukovich \cite{Serafini89PESP} and also used by Nachtigall~\cite{nachtigall98} for periodic timetabling problems, see also \cite{opitz09}.
In the context of delay management problems event-activity networks are used in~\cite{schoebel07}.
We use a model similar to \cite{schachtebeck09} and extend the
event-activity network to account for early turnarounds and capacity 
constraints at the station platforms.

An event-activity
$\mathcal{N}=(\mathcal{E},\mathcal{A})$ is a directed graph, where the nodes in
$\mathcal{E}$ denote the events and the edges in $\mathcal{A}$ are called
activities. 
In our setting the timetable $\pi$ consists of scheduled \textit{trips}. A trip connects two stations with a specified train via a certain track. A sequence of consecutive trips between two terminal stations is called a {\em line}. A sequence of lines is called a circulation.
Each trip generates two events: a {\em departure event} $v\in\Edep$ from a station,
and an {\em arrival event} $w\in\Earr$ at an adjacent station. These two events are connected by a {\em driving activity} $a=(v,w)$. Together, they model a constraint between those
events, meaning that event $w$ cannot happen before event $v$ has taken place, plus the minimum duration $L_{a}$ assigned to activity $a$.  

\noindent Similar to \cite{schoebel07}, we
distinguish between different types of activities. 


\emph{Train activities} $(\Atrain)$ represent the driving, waiting and turning
operations of a train.
\begin{enumerate}
\item {\em Driving activities} ($\Adrive\subset\Edep\times\Earr$) represent scheduled
trips between two stations.
\item {\em Waiting activities} ($\Await\subset\Earr\times\Edep$) represent 
the scheduled waiting
times at a station to let passengers enter and leave the train. 
\item {\em Turning activities}
($\Aturn\subset\Earr\times\Edep$) connect an arrival event with a departure event,
but in addition to waiting arcs, the events connected by a turning activity
do not belong to trips on the same line; instead, they represent the 
possibility for trains to truncate their current circulation and continue on an different line.
\end{enumerate}

\emph{Headway activities} can be split into two subsets of activities that
deal with the limited capacity of tracks between two stations $(\Atrack)$
or on a platform ($\Astation)$:
\begin{enumerate}
\item The headway activities ($\Atrack\subset\Edep\times\Edep$) model the
  headway condition between two events that share the same track resource
  between two stations.
  For each $a=(v,w)\in \Atrack$, there is a corresponding activity
  $\tilde{a}=(w,v)\in \Atrack$ so that together they model a precedence
  constraint, i.e., indicate which of the departure events will take place
  first. Thus, only one of each pair of activities can be active.

\item If we have to route trains through a corridor consisting of several
  stations on a single track, we have to introduce a second kind of headway
  activity ($\Astation\subset\Earr\times\Earr$).  These have to make sure
  that trains driving in opposite directions do not enter the same platform
  and block each other; in other words, they ensure that the departure
  event of some train takes place before another train arrives at the same
  platform. Unfortunately, two arrival events could imply more than one
  pair of precedence constraints. In fact, this kind of headway activities
  involves four events, i.e. each possible departure event corresponding to
  the arrival events. In our model we add the possibilities for trains to
  turn at certain points, so the point of time a train may enter a station
  is strongly connected to the direction of the successive trip of the
  train.

  In addition, the headway constraints transmits the priority decision
  taken from the precedence constraints on the
  tracks 
  of one side of the station to the other side.

  As an example, consider two trains $k,l\in\mathcal{V}$ and three station
  $A,B,C\in\mathcal{S}$. Train $k$ moves from station $A$ through station
  $B$ to station $C$ and train $l$ drives into the opposite directions from
  station $C$ through station $B$ to station $A$. So if the departure event
  from station $A$ to $B$ of train $k$ is scheduled to take place before
  the departure event of train $l$ at station $B$ to $A$, then also train
  $k$ has to depart from station $B$ to $C$ before train $l$ leaves station
  $C$ in opposite directions to station $B$.
\end{enumerate}

In our model trains could be ordered to return to a depot and reserve trains may be inserted.  
Therefore, we extend the event-activity network by adding depots $D_i\in\mathcal{D}$ and replacement capacities $R^{D_i}$ for each depot. For each trip that ends at a station connected to a depot we add a trip to the depot, i.e., a departure event at the station and an arrival event at the depot connected by a driving activity. The sets $\Edepot$ contains the arrival events at the depots and $\Areturn$ the corresponding driving activities. Trains stored in the depots may become reinserted after a minimum idle time, so analogously, the arrival events at a depot become connected to the departure events of possible follow on trips.
Similarly, replacement capacities are inserted, by adding an event $r\in\Erdepot$ for each depot supplying replacement vehicles and connecting them to the network.



Despite rescheduling the timetable events our model should be able to modify the vehicle schedules to find feasible new circulations, 
perform unplanned turns or to return to a depot at certain station. 
Therefore,a classical flow model is integrated. Each activity $a\in\Atrain$ can
transport a flow of at most one unit. For each event $i \in
\mathcal{E}$, the outflow 
equals the inflow. Thus,
every event with an ingoing activity arc that transports one unit of flow has a predecessor
event and due to flow conservation is also connected to a successor event.
The goal is to find feasible flows in the network, i.e., trains always have a sequence of following trips and do not stand idle and block the tracks.

A feasible timetable $\pi\in \mathbb{N}_0^{\mathcal{E}}$ assigns a time $\pi_v$ to each timetable event $v\in \mathcal{E}$ and a feasible flow through the network represents the circulation of trains.

%


\subsection{Integer Programming Formulation}

A solution of the following integer programming model yields a new feasible
dispatching timetable $\pi^\ast\in\mathbb{Z}_+^{\mathcal{E}}$ and vehicle
schedule in case of a disruption based on the original timetable $\pi\in
\mathbb{Z}_+^{\mathcal{E}}$ and the corresponding vehicle circulations.

\begin{small}
\begin{equation}
	\max \sum_{v\in \Edep}\sum_{a\in \delta^+(v)}c_a\,y_{a} \label{ip_o1}
\end{equation}
subject to
\begin{align}
	M(1-y_{a})+x_w + \pi_{w} &\geq x_v + \pi_v + L^{min}_{a}  																			&&\forall a=(v,w)\in \Atrain, 																																																											\label{ip_c2}\\
	M(1-y_{a})+x_w + \pi_{w} &\leq x_v + \pi_v + L^{max}_{a}  																		&&\forall a=(v,w)\in \Adrive, \label{ip_c3}\\
	M(3-\sum_{a\in \delta^+(w)}y_{a}-&\sum_{a\in \delta^+(v)}y_{a}-g_{vw}) && \nonumber\\
	+x_w + \pi_w &\geq x_v + \pi_v + L_{vw}			&&\forall (v,w)\in \Atrack,\label{ip_c4}\\
	g_{vw}+g_{wv} &=1		&&\forall (v,w)\in \Atrack,\label{ip_c5}\\
	-M(3-y_{vw}-y_{{v'w'}}-&h_{vv'})  &&  \nonumber\\
	 + x_w + \pi_w + S_{vv'}&\leq  x_{v'} + \pi_{v'} 						&&\forall (v,w),(v',w')\in\Atrain: \nonumber \\
	 & && (v,v')\in \Astation, \label{ip_c6} \\	
	h_{vv'}+h_{{v'v}} &=1																																				&&\forall (v,v')\in \Astation, \label{ip_c7}  \\
\sum_{a\in \delta^-(v)}y_{a} - \sum_{a\in \delta^+(v)}y_{a} &=0																&&\forall v \in \Edep \cup \Earr,\label{ip_c8}\\
\sum_{a\in \delta^-(v)}y_{a} - \sum_{a\in \delta^+(v)}y_{a}&\leq 1														&&\forall v \in \Edepot,\label{ip_c9}\\
\sum_{a\in \delta^+(v)}y_{a} &\leq R^{D_i}																													&&\forall v \in \Erdepot,  D_i\in\mathcal{D},\label{ip_c10}\\
	y_a &\in \{0,1\}																																							&&\forall a\in \mathcal{A},\label{ip_c12}\\
x_v &\in \mathbb{Z}_+																																					&&\forall v\in \mathcal{E},\label{ip_c13}\\
	g_{a} 	 &\in \{0,1\}																																					&&\forall a\in \Atrack,\label{ip_c14}\\
	h_{a} &\in \{0,1\}																																				&&\forall  a\in \Astation\label{ip_c15}
\end{align}
\end{small}

The variables of the model are as follows: $y$ is the circulation of
trains, $x_v$ determines the delay of event~$v$, $g_a$ and $h_a$ represent
precedence constraints (see below). In the following, we successively
discuss the constraints of the above model.

The objective function \eqref{ip_o1} aims at maximizing the number of trips
that still will be served in the re-optimized dispatching timetable
$\pi^\ast$, possibly with delay. Because the frequency of most subway
systems is quite high, canceling few trips will lead to only a minor delay
and inconvenience for the passengers. Therefore, minimizing the overall
delay, as it is done in most publications about delay management, is only a
secondary goal.

A trip is served if the corresponding departure event $v\in\Edep$ has an
outflow of one unit of flow, i.e., $a=(v,w)\in\Adrive$ is part of a train
circulation. We can attach an additional cost coefficient $c_a$ to each
driving activity $a$; this allows a weighting of the trips.

Given an original timetable $\pi$ and a corresponding vector
$x\in\mathbb{Z}_+^{\mathcal{E}}$, representing the delays $x_v$ of each
event $v$, Constraints~(\ref{ip_c2}) require that if an activity $a=(v,w)$
is in the solution, the earliest time for event $w$ to start is the
starting time of the predecessor event $\pi_v$, plus the occurred delay
$x_v$ and the minimum duration $L^{min}_{a}$ of activity $a$. With respect
to the buffer times, a delay $x_v$ of event $v$ might cause a delay $x_w$
of the successor event $w$. We assume that the driving times are symmetric.
Constraints~\eqref{ip_c3} make sure that a driving activity $a$ is bounded
by a maximal duration $L^{max}_{a}$, which includes possible buffer times
and safety margins, but should not be too large to prevent trains being
idle for a long period of time.
The precedence constraints formulated by (\ref{ip_c4}) and (\ref{ip_c5})
for shared track resources, come in pairs for $(v,w),(w,v)\in\Atrack$ and
ensure that conflicting events $v,w\in\Edep$ that use the same track
resource are correctly scheduled to prevent deadlocks and keep safety
margins. The model implies
\[ 
g_{vw}= \left\{ \begin{array}{r@{\quad : \quad}l} 
				 1 & \text{if event } v \text{ takes place
                                   before } w, \\ 0 & \text{otherwise,}
                               \end{array} \right. 
\]
where $L_{vw}$ represents the minimal waiting time for event $w$ to start
if $w$ is scheduled after $v$. If the trips corresponding to the departure
events $v,w\in\Edep$ use the track in the same direction, $L_{vw}$ is the
safety margin between both departure events; if the track is used in
opposite direction, $L_{vw}$ contains the maximum allowed transit time for
the trip and a safety margin $S_{vw}$.

Similarly the precedence constraints (\ref{ip_c6}), (\ref{ip_c7}) for the
tracks inside a station ensure that no two trains use the same platform at
the same time, where the model implies
 \[ 
h_{vv'}= \left\{ \begin{array}{r@{\quad : \quad}l}
				 1 & \text{if event } w \text{ takes place
                                   before event, } v' \\ 0 &
                                 \text{otherwise.} \end{array} \right. 
\]
Regarding two conflicting arrival events $v,v'\in\Earr$ at some station and
their corresponding departure events $w,w'\in\Edep$ with $a = (v,w)$,
$\tilde{a} = (v',w') \in \Atrain$ denoting the activities, if the solution contains both activities ($y_a =
y_{\tilde{a}} = 1$) and $h_{vv'}=1$, departure event $w$ takes place before
the arrival event $v'$ at the platform with respect to the safety margin
$S_{vv'}$ (see Figure~\ref{h-station}).

The safety margins $S_{vw}$ and $S_{vv'}$ should be equal and depend on the
direction of the corresponding trips:


\begin{figure}
  \centering
  \includegraphics[width=.6\linewidth]{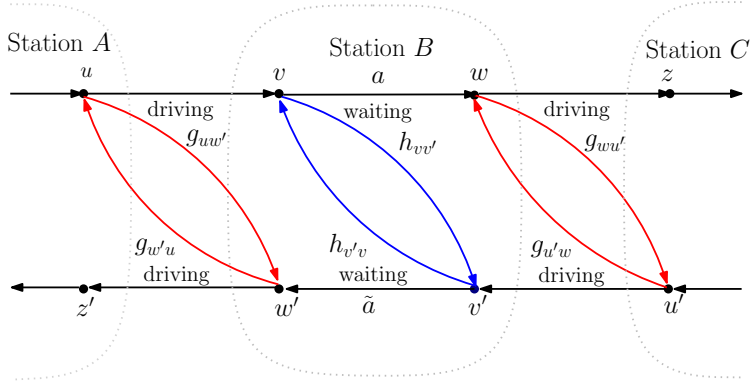}
  \caption{{The event-activity network for a conflict on a single
      track. One train is driving from station A to station C while another
      train is driving in the opposite direction. The nodes represent the
      corresponding departure and arrival events and $a,\tilde{a}$ the
      waiting activities. Furthermore the disjunctive headway activities
      are shown, denoted by the variable name of corresponding constraint
      of the IP.}}
  \label{h-station}
\end{figure}

\begin{lemma}
  The safety margin $S_{vv'}$ for two trains driving in opposite
  directions that enter and leave a station on the same track should
  satisfy
  \[
  S_{vv'} > \tfrac{1}{2}\, \max
  \{(L^{max}_{wz}-L^{min}_{wz}),(L^{max}_{uv}-L^{min}_{uv})\}.
  \]
\end{lemma}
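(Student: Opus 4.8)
The plan is to show that the stated inequality is precisely the threshold that keeps the disjunctive decisions around a meeting point consistent, and that below it the integer program admits a schedule satisfying all of \eqref{ip_c2}--\eqref{ip_c15} in which two opposing trains nevertheless occupy a shared resource simultaneously. Concretely, I would fix the conflict configuration of Figure~\ref{h-station}: train $k$ runs $A\to B\to C$ with events $u\to v\to w\to z$, so that $(u,v),(w,z)\in\Adrive$ are the two driving activities incident to station $B$, and train $l$ runs $C\to B\to A$; by the assumed symmetry of driving times the reversed legs of $l$ carry the same bounds $L^{min}$, $L^{max}$ as $(u,v)$ and $(w,z)$. Three disjunctive pairs are attached to this picture: the headway pair in $\Atrack$ on the segment $A$--$B$ (variables $g$), the headway pair in $\Atrack$ on the segment $B$--$C$, and the platform pair $(v,v')\in\Astation$ at $B$ (variables $h$ in \eqref{ip_c6}--\eqref{ip_c7}). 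Physically these all answer the single question of which of the two trains passes the bottleneck first, and \eqref{ip_c6} is the only constraint that links the two track decisions through the station; so the margin $S_{vv'}$ in \eqref{ip_c6} is exactly what has to keep that link tight against the variability of the driving times.

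Next I would write down, for a fixed meet order $h_{vv'}=1$ (the case $h_{v'v}=1$ being symmetric), the chain of inequalities that \eqref{ip_c2}--\eqref{ip_c7} impose along $u\to v\to w\to z$ and along $l$'s reversed legs: the minimum-duration bounds \eqref{ip_c2} going forward, the maximum-duration bounds \eqref{ip_c3} on the two driving legs, the opposite-direction track headways \eqref{ip_c4}--\eqref{ip_c5} on the two segments (each contributing a term of the form $L^{max}+S_{vw}$, with $S_{vw}=S_{vv'}$ by the remark preceding the lemma), and the platform inequality \eqref{ip_c6}. Combining them, one isolates a single line in which $S_{vv'}$ appears next to the slack $L^{max}_{wz}-L^{min}_{wz}$ of the departure segment (and, in the symmetric case, next to $L^{max}_{uv}-L^{min}_{uv}$ of the arrival segment): it says that $S_{vv'}$ must absorb one half of that slack, the other half already being absorbed by the equal safety margin carried in the track headway of the adjacent segment --- which is where the factor $\tfrac12$ and the maximum over the two incident segments come from. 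Requiring strict inequality there forces the platform decision $h$ to agree with both track decisions $g$; conversely, if $S_{vv'}\le\tfrac12\max\{(L^{max}_{wz}-L^{min}_{wz}),(L^{max}_{uv}-L^{min}_{uv})\}$, reading the same line backwards produces consistent values of $x$, $y$, $g$, $h$ that satisfy every constraint but route $l$ onto a single-track segment still occupied by $k$, establishing necessity of the bound.

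The step I expect to be the main obstacle is not the inequality itself --- which is one line of the chain --- but certifying that the offending assignment is a genuine solution of the whole IP: one has to pick a base timetable $\pi$ and nonnegative integral delays $x$, set all flow variables $y$ so that \eqref{ip_c8}--\eqref{ip_c10} hold, choose $M$ large enough that every big-$M$ inequality outside the critical chain is slack, and still keep the two trains physically overlapping on a shared track. Carrying out this bookkeeping on the smallest instance that realizes Figure~\ref{h-station}, and then checking the reverse implication --- that a strictly larger $S_{vv'}$ together with the matching track margin does force the three disjunctions to agree --- to justify the word ``should'' in the statement, is the delicate part; everything else is the forward chain of \eqref{ip_c2}--\eqref{ip_c7}.
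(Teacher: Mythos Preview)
Your forward direction --- chaining \eqref{ip_c2}--\eqref{ip_c7} to show that a large enough $S_{vv'}$ forces the platform variable $h_{vv'}$ to agree with the two track variables $g$ --- is exactly what the paper does. The paper argues by contradiction in four short implications ($h_{vv'}=1 \Rightarrow g_{wu'}=1$, $g_{wu'}=1 \Rightarrow h_{vv'}=1$, and their two mirror images); in each case, assuming the two binaries disagree and combining the active instance of \eqref{ip_c6} with the opposite-direction instance of \eqref{ip_c4} and one of \eqref{ip_c2}/\eqref{ip_c3} yields $2S_{vv'}\le L^{max}-L^{min}$ for the relevant leg, contradicting the hypothesis. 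Your explanation of the factor $\tfrac12$ (the margin $S_{vv'}$ is counted once in the station constraint and once, via $S_{u'w}=S_{vv'}$, in the adjacent track headway) is precisely the mechanism in the paper's chain.

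Where you diverge is in reading ``should satisfy'' as a tight threshold and therefore committing to a necessity argument: constructing a full IP-feasible assignment of $\pi,x,y,g,h$ that is physically infeasible whenever the inequality fails. The paper does not do this. It treats the lemma purely as a sufficient condition for consistency of the three disjunctive decisions, and the proof ends once the four implications are established. So the step you flag as the ``main obstacle'' --- the bookkeeping needed to certify a counterexample against \eqref{ip_c8}--\eqref{ip_c10} and the big-$M$ choice --- is simply not required; you can drop that half of the plan and the remaining argument matches the paper.
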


\begin{proof}
  The event activity network for two trains, driving through a station in
  opposite directions on a single track is shown in Figure~\ref{h-station}.
  In this case, we have $y_a = y_{\tilde{a}} = 1$, i.e., the solution
  contains these conflicting trips. To simplify notation, we set $\pi_v^* =
  \pi_v + x_v$ for all $v\in\mathcal{E}$, and we use the new times of a
  dispatching timetable. In order to obtain a feasible timetable, the
  headway variables $g_{uw'}$, $h_{vv'}$, $g_{wu'}$ should be equal. We
  show that
  \[
  S_{vv'} > \tfrac{1}{2}\, \max
  \{(L^{max}_{wz}-L^{min}_{wz}),(L^{max}_{uv}-L^{min}_{uv})\}
  \]
  implies $g_{uw'}=h_{vv'}=g_{wu'}=1$.
  \renewcommand{\labelenumi}{(\roman{enumi})}
  \begin{enumerate}
  \item $h_{vv'}=1 \bl \Rightarrow \bl g_{wu'}=1$\\
    If $h_{vv'}=1$ then $\pi_w^* + S_{vv'} < \pi_{v'}^*$, because
    of~(\ref{ip_c6}). We have $L_{u'w} = S_{u'w} + L^{max}_{u'v'} = S_{vv'}
    + L^{max}_{u'v'}$. If $g_{wu'} = 0$, it follows that
    \begin{align*}
      \pi_w^*  &\geq \pi_{u'}^*+L_{u'v'}^{min}+S_{vv'}  \\
      \pi_w^* + S_{vv'} &\geq \pi_{u'}^*+L_{u'v'}^{min}+2S_{vv'} \\
      \pi_{v'}^* &\geq \pi_{u'}^*+L_{u'v'}^{min}+2S_{vv'}\\
      \pi_{v'}^* + L_{u'v'}^{max} &\geq \pi_{u'}^*+ L_{u'v'}^{max}+L_{u'v'}^{min}+2S_{vv'}\geq \pi_{v'}^*+L_{u'v'}^{min}+2S_{vv'} \\
      L_{u'v'}^{max} &\geq L_{u'v'}^{min}+2S_{vv'}\\
      S_{vv'}& \leq \tfrac{1}{2} \, (L_{u'v'}^{max} - L_{u'v'}^{min}),
    \end{align*}
    which is in contradiction to the condition on $S_{vv'}$.
	
  \item $g_{wu'}=1 \bl \Rightarrow \bl h_{vv'}=1$\\
    Assume $g_{wu'}=1$ and $h_{vv'}=0$. Because of~(\ref{ip_c4}), we have
    $\pi_{u'}^* \geq \pi_w^* + L_{wz}^{min} + S_{vv'}$. Because
    of~(\ref{ip_c6}) and $h_{vv'} = 0$, it follows that $\pi_{u'}^* \geq
    \pi_{v'}^*+ S_{vv'} + L_{wz}^{min} + S_{vv'}$, which is in
    contradiction to the condition of $S_{vv'} > 0$.
  \item $h_{vv'}=1 \bl \Rightarrow \bl g_{uw'}=1$ and
  \item $ g_{uw'}=1 \bl \Rightarrow \bl h_{vv'}=1$ can be shown
    analogously.\hfill$\Box$
  \end{enumerate}
\end{proof}

\begin{figure}
  \centering
  \includegraphics[width=.6\linewidth]{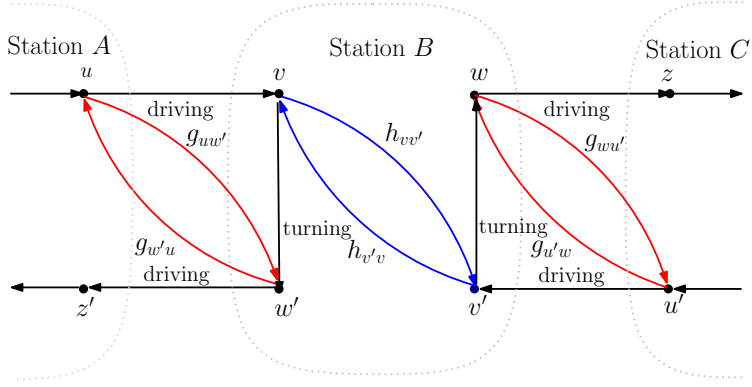}
  \caption{{Example: Trains are allowed to turn at~$B$}}
  \label{turn}
\end{figure}

Remember that we cannot always express the headway constraints via a single
variable, as the condition $g_{uw'}=h_{vv'}=g_{wu'}$ does not hold, if
trains are allowed to turn (see Figure~\ref{turn}).

As we described above, a flow model is integrated to model the circulation
of trains. Constraint (\ref{ip_c8}) ensures flow conservation on the
events, i.e., the outflow should be equal to the inflow. A flow in the
network encodes the sequence of trips for each train during the observed
time interval. Note that trains represent a single commodity. Depots can
consume flow (\ref{ip_c9}) and initiate flow~(\ref{ip_c10}) with respect to
the maximum number of replacement trains~$R^{D_i}$ of depot~$D_i\in\mathcal{D}$.

The big-$M$ has to be chosen sufficiently large in order to yield a correct
model; analogous to~\cite{schachtebeck09}, we set
\[ 
M := Y+ \max_{(v,w)\in \mathcal{A} } (\pi_w - \pi_v + L_{vw}+S_{vw}), 
\]
where $Y$ denotes the maximum allowed delay for each event and we let
$L_{vw} = S_{vv'} = 0$, on arcs where they have not been defined. In our
case $Y$ is bounded by the cycle time. 

We summarize:
\begin{observ}
  Feasible dispatching timetables $\pi^\ast = \pi + x$ correspond precisely
  to the feasible solutions of the integer program with constraints
  (2)--(14).
\end{observ}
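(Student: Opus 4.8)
The statement is a model-correctness claim, so the plan is to exhibit the correspondence by matching, one constraint family at a time, the defining requirements of a \emph{feasible dispatching timetable together with a feasible vehicle circulation} to the blocks \eqref{ip_c2}--\eqref{ip_c15}, and then to check that the two directions are inverse to each other. Concretely, from a feasible solution $(y,x,g,h)$ I would read off $\pi^\ast = \pi + x$ and the subnetwork of activities with $y_a = 1$; conversely, from a feasible dispatching timetable $\pi^\ast$ with a feasible circulation I would set $x = \pi^\ast - \pi$, let $y_a = 1$ exactly on the used arcs, and fix $g, h$ according to the realized order of the relevant events (any tie-break compatible with \eqref{ip_c5} and \eqref{ip_c7}). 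One then argues that, given $(x,y)$, the precedence variables are essentially forced by the timing, so the correspondence is well defined in both directions.

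The flow part carries no content: constraints \eqref{ip_c8} are flow conservation at departure and arrival events, \eqref{ip_c9} and \eqref{ip_c10} are absorption at depots and capacitated emission at replacement depots, and the bounds \eqref{ip_c12} give unit capacities; by flow decomposition a feasible $y$ splits into train itineraries that are closed circulations or run from a replacement depot to a depot, which is exactly the requirement that trains always have a sequence of follow-on trips and never stand idle blocking a track. The timing part is equally direct once the big-$M$ terms are handled: on an activity with $y_a = 1$ the slack $M(1-y_a)$ in \eqref{ip_c2} vanishes and the inequality becomes the minimum-duration precedence $\pi_w^\ast \ge \pi_v^\ast + L^{min}_{a}$, and likewise \eqref{ip_c3} becomes the upper bound $\pi_w^\ast \le \pi_v^\ast + L^{max}_{a}$ on used driving activities; on an activity with $y_a = 0$ both must be shown vacuous.

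The headway blocks need the genuine case analysis. Constraint \eqref{ip_c5} forces exactly one of each pair in $\Atrack$ to carry the ``first'' label and \eqref{ip_c7} fixes an orientation for each quadruple in $\Astation$. In \eqref{ip_c4} the bracket $3-\sum_{a\in\delta^+(w)}y_a-\sum_{a\in\delta^+(v)}y_a-g_{vw}$ is a nonnegative integer that equals $0$ exactly when both departure events $v, w$ are served and $g_{vw}=1$, in which case the constraint reduces to the true separation $\pi_w^\ast \ge \pi_v^\ast + L_{vw}$ (covering both the same-direction safety margin and the opposite-direction transit-plus-margin case), and is slack otherwise; \eqref{ip_c6} is the analogous statement for the two train activities $(v,w),(v',w')$ with $(v,v')\in\Astation$. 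Here I would note explicitly that, as remarked after the preceding lemma and shown in Figure~\ref{turn}, the priority variables on the two sides of a corridor station need not be forced equal when turns are allowed; the claim does not need them to be, since it only needs each disjunctive constraint to switch on precisely when all of its events are served. The converse direction is the mirror image: feasibility of $\pi^\ast$ with a feasible circulation makes every one of \eqref{ip_c2}--\eqref{ip_c15} hold for the chosen $(y,x,g,h)$.

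The main obstacle is the big-$M$ bookkeeping, i.e.\ verifying that with $M := Y + \max_{(v,w)\in\mathcal{A}}(\pi_w - \pi_v + L_{vw} + S_{vw})$ and the convention $L_{vw}=S_{vv'}=0$ where undefined, every inequality whose indicator variables put it in the ``off'' state holds automatically for all admissible delay vectors $x\in[0,Y]^{\mathcal{E}}$. For \eqref{ip_c2} and \eqref{ip_c3} this is a one-line estimate using $0\le x_v,x_w\le Y$; for \eqref{ip_c4} and \eqref{ip_c6} one checks each assignment of the binary variables in the bracket and observes that whenever the bracket is $\ge 1$ the term $M$ (resp.\ $-M$) already dominates the worst-case value of $\pi_w^\ast - \pi_v^\ast$ together with the relevant margin, which is exactly what the definition of $M$ was chosen to guarantee. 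Everything else is a syntactic match and requires no estimates.
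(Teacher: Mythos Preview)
Your proposal is correct and, in fact, more detailed than what the paper offers. The paper does not prove this statement at all: it is phrased as an \emph{Observation} introduced by the words ``We summarize,'' and is simply meant to encapsulate the constraint-by-constraint discussion that precedes it in the text. There is no proof environment and no further argument; the claim is treated as self-evident once the meaning of each block \eqref{ip_c2}--\eqref{ip_c15} has been explained informally.

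What you have written is a formalization of exactly that informal discussion: you make the two directions of the correspondence explicit, you identify which constraints are active and which are slack depending on the indicator variables, and you isolate the big-$M$ verification as the one place where an actual estimate is needed. This is the natural way to turn the paper's exposition into a proof, and nothing in your plan is wrong or missing. If anything, you are being more careful than the authors, who leave the big-$M$ sufficiency entirely to the reader after stating its definition.
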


\subsection{Reducing the Size of the Integer Program}

The computational running time for solving the IP is mainly influenced by
the number of precedence constraints in the system. These constraints
involve big-$M$ constants and thus lead to a weak LP relaxation: here we
can set each $g_a$ and $h_{a}$ with $a\in\Ahead$ to $\tfrac{1}{2}$. This
makes it desirable to reduce the number of precedence constraints by fixing
them wherever possible. This can be achieved by considering the scheduling
logic inherent in the system. If a disruption occurs, decisions for parts
of the network imply decisions for other parts, e.g., if there are
alternating sequences of driving and waiting activities with no turning
possibilities. Furthermore, trains driving into the same direction cannot
pass each other. Thus, variables can be fixed by using the natural
precedence relations. In a second step, we can build blocks of such
sequences and contract the driving and waiting activities into just one
driving activity with the new minimum driving time. Because the delay of an
event is passed to successive events, the travel time is bounded by the
maximum allowed delay with respect to the buffer times. Safety intervals
are inherited from the original event-activity network.  The cost of the
departure events is set to the sum of the corresponding contracted
departure events. This process yields a \emph{reduced event-activity
  network} $\mathcal{N}^r = (\mathcal{E}^r, \mathcal{A}^r)$.

\begin{lemma}
  An instance of the reduced network
  $\mathcal{N}^r=(\mathcal{E}^r,\mathcal{A}^r)$ has a feasible solution, if
  and only if the corresponding instance of the original event-activity
  network $\mathcal{N}=(\mathcal{E},\mathcal{A})$ has a feasible solution,
  and their objectives are equal.
\end{lemma}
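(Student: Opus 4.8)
The plan is to exhibit an explicit bijection between feasible solutions of the original network $\mathcal{N}$ and feasible solutions of the reduced network $\mathcal{N}^r$ that preserves the objective value. The contraction is built from two operations: (i) fixing precedence variables $g_a,h_a$ that are forced by the natural scheduling logic (alternating driving/waiting chains with no turning, and the no-overtaking rule for trains in the same direction), and (ii) collapsing a maximal chain of driving and waiting activities with no intervening turning possibility into a single driving activity. It suffices to show that each of these two operations, applied once, preserves feasibility in both directions and preserves the objective; the general statement then follows by induction on the number of contraction steps.

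For step (i), I would argue that the fixed values are the only ones consistent with any feasible solution. Concretely, if two departure events $v,w$ use a shared track in the same direction and $v$ precedes $w$ in the original circulation with no turn permitted between them, then flow conservation (\ref{ip_c8}) together with the driving/waiting constraints (\ref{ip_c2}) forces the temporal order, hence forces $g_{vw}=1$; conversely the value $g_{vw}=1$ is consistent with every such solution. Thus projecting out these variables (and the now-redundant constraints (\ref{ip_c4})--(\ref{ip_c5}) in which they appear, which become implied by (\ref{ip_c2})) changes neither the feasible set nor the objective, since $g$ does not appear in (\ref{ip_o1}).

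For step (ii), consider a chain $v_0 \xrightarrow{a_1} v_1 \xrightarrow{a_2} \cdots \xrightarrow{a_k} v_k$ of alternating driving and waiting activities with no turn option at any internal $v_i$. Flow conservation forces all $y_{a_i}$ to take a common value $y$, so the chain is either entirely used or entirely unused; the contracted arc $\bar a = (v_0,v_k)$ inherits this single variable $y_{\bar a}$, with cost equal to $\sum_i c_{a_i}$ restricted to the driving activities, giving objective equality. Given a feasible solution of $\mathcal{N}$, restricting $\pi^\ast$ to $\mathcal{E}^r$ yields a feasible solution of $\mathcal{N}^r$: the new minimum duration $L^{min}_{\bar a} := \sum_i L^{min}_{a_i}$ is satisfied by telescoping (\ref{ip_c2}), the maximum duration $L^{max}_{\bar a}$ is set to the bound imposed by the buffer times and the maximum allowed delay $Y$ (as the text notes, the delay passed along the chain cannot exceed this), and the inherited safety intervals $S$ and $L_{vw}$ on arcs touching $v_0$ or $v_k$ are exactly the originals. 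Conversely, given a feasible solution of $\mathcal{N}^r$, I would reconstruct the internal events $v_1,\dots,v_{k-1}$ greedily — e.g. set each at its earliest feasible time $\pi^\ast_{v_i} = \pi^\ast_{v_{i-1}} + L^{min}_{a_i}$ — and check that the headway/precedence constraints on the now-reinstated internal departure events are automatically satisfied because the corresponding $g,h$ were fixed in step (i) in a way consistent with any such filling; here one uses that $L^{max}_{\bar a}$ was chosen precisely so that a valid filling always exists within the delay budget.

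The main obstacle I expect is step (ii)'s reconstruction direction: verifying that the reinstated internal events never create an infeasible headway conflict. The subtlety is that an internal departure event of a contracted chain may still share a track with some event outside the chain, so one must confirm that the safety-margin data transferred to $\bar a$ (together with the precedence variables fixed in step (i)) genuinely dominates all the per-arc constraints of the internal events — in effect, that contracting does not lose any binding track-capacity constraint. I would handle this by noting that all such external conflicts involving an internal event also involve $v_0$ or $v_k$ through the fixed no-overtaking order, so the strongest of the internal constraints is exactly the one retained on $\bar a$; making this precise, with the definitions of $L_{vw}$ and $S_{vw}$ from the single-track case, is the technical heart of the argument.
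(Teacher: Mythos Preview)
Your overall architecture — split the contraction into (i) fixing forced precedence variables and (ii) collapsing chains, then argue each step preserves feasibility and objective — is sound and more explicit than the paper, which gives only a terse sketch. The objective-equality argument via the common flow value $y$ on a chain and the summed cost is exactly right.

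The substantive divergence is in the reconstruction rule for step~(ii). You propose to fill the internal events greedily at their earliest feasible times, $\pi^\ast_{v_i}=\pi^\ast_{v_{i-1}}+L^{min}_{a_i}$. The paper instead makes a case distinction on the endpoint delays: if $x_{v_0}\geq x_{v_k}$ it \emph{reduces the delay as late as possible} along the chain, and if $x_{v_0}\leq x_{v_k}$ it \emph{increases the delay as soon as possible}. The stated reason is precisely the headway interaction you flag as the main obstacle: absorbing buffer late keeps the reconstructed train from running into a following train, while inserting slack early keeps it from running into the train ahead. Your uniform earliest-time rule absorbs buffer \emph{early}, which is the wrong choice in the first case; a following train reconstructed the same way can then violate a same-direction headway at an internal event even though the endpoints were fine in~$\mathcal{N}^r$. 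Also note that your formula need not land on the prescribed $\pi^\ast_{v_k}$ and does not by itself enforce the per-arc upper bounds~(\ref{ip_c3}) on the internal driving activities.

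Your alternative fix — arguing that every internal headway constraint is dominated by one retained at $v_0$ or $v_k$ via the fixed no-overtaking order — is plausible but is not what the paper does, and making it rigorous would require showing that the inherited safety interval on $\bar a$ really majorizes every internal $L_{vw}$ and $S_{vv'}$, which you have not yet done. The paper sidesteps this entirely by choosing the reconstruction so that the internal positions are, by construction, on the safe side relative to both neighbouring trains; adopting that case-dependent rule would close the gap with much less work than the dominance argument you outline.
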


\def\Eseq{\mathcal{E}_{\text{seq}}^{vw}}

\begin{proof}
  Let $\pi^r$ be a feasible dispatching timetable for the reduced
  event-activity network $\mathcal{N}^r = (\mathcal{E}^r,\mathcal{A}^r)$.
  Let $a = (v,w)\in\mathcal{A}^r$ be a driving activity, and let
  $\Eseq\subset\Edep\cup\Earr$ be the corresponding sequence of events from
  the start event $v\in\Edep$ to the end event $w\in\Earr$. If $x_v \geq
  x_w$ in~$\mathcal{N}^r$, we reduce the delay of the events along the
  sequence as late as possible; this is necessary to prevent blocking of
  following trains. If $x_v \leq x_w$, we increase the delay as soon as
  possible to prevent a collision with the train in front. The other
  direction follows analogously.\hfill$\Box$
\end{proof}

Note that the overall delay of a dispatching in the the original
network~$\mathcal{N}$ that results from the application of the strategy in
the above proof might be higher than the minimal possible delay
in~$\mathcal{N}$. This imposes no serious restriction, since our objective
is insensitive to delays.

The reduced network $\mathcal{N}^r$ usually contains significantly fewer
nodes and arcs, resulting in a smaller IP instance. This enables us to
obtain solutions for significantly larger networks; in our experiments, the
number of binary variables was reduced by roughly a factor of three.

\section{Experiments}

In this section we report on computational experiments, which we conducted
to evaluate the applicability of our models.  They are based on the
real-life timetable of the subway line U6 in Vienna (Figure~\ref{u6}). We employed the Falko
tool of Siemens AG~\cite{falko}. Timetables are given with a granularity of
one second.

The considered subway line has 24 stations, with terminal stations
Siebenhirten and Floridsdorf; the regular travel time between these
terminals is 34 minutes. We use typical frequencies of 5 or 10 minutes in
our experiments. Trains are circulating between the terminals. The stations
Floridsdorf, Michelbeuern, and Alterlaa are connected to depots that each
contain a reserve train in our scenarios. Sidings exist at Siebenhirten.
Our initial timetable contains buffer times of about 10\% of the scheduled
driving times between stations and at the end of the line. 

We consider a variety of different disruptions with focus on the blockage
of one side of the two tracks. Such disruptions often yield very hard
instances and lead to severe problems in the daily operation of a line.
During a disruption, a section of tracks on one side is blocked for a time
interval between 5 minutes and 2 hours; until the section is reopened, no
train is allowed to enter the blocked section. The track topology allows
switching the track and performing a turn at several stations. Trips
affected by the blocked section can use the unblocked track in the opposite
direction. More specifically, we allow trains to pass the switches that are
immediately before and after the blocked section. The circulation of a
train can be truncated by introducing a turning.

For each scenario below, we use our model to generate a feasible
dispatching timetable. We have to return to the original timetable within 60
minutes after reopening the blocked sections. Choices for trains include
turning at specified stations, increasing their delays, or returning to a
depot; the maximum allowed delay of a trip is equal to the cycle time.

\begin{figure}
  \centering
  \includegraphics[width=.6\linewidth]{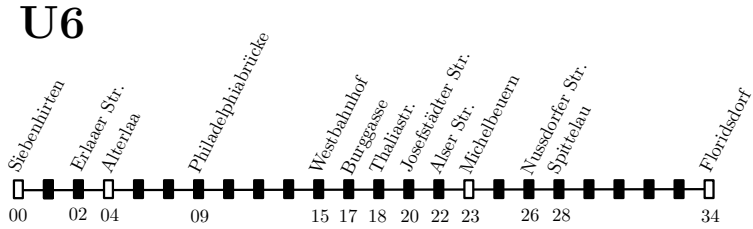}
  \caption{{The Vienna subway line U6. Boxes represent the stations: a white box, if the station is connected to a depot or siding. The numbers below are the driving times from Siebenhirten to the stations.}}
  \label{u6}
\end{figure}

\subsection{Results}
We tested four scenarios on the Vienna U6 that differ in location, transit
times, the topology of the switches and turn possibilities to evaluate 
the impact of disruptions:
\begin{description}
\item[Scenario 1:] A disruption occurs between Michelbeuern and
  Westbahnhof. These stations are located close to the center of the line
  and are heavily used. Trips between these stations are assigned to the
  opposite track. Trains running from Michelbeuern to Westbahnhof have to
  switch the track after Michelbeuern and return to their originally
  assigned track between Burggasse and Westbahnhof. We allow trains to turn
  at Michelbeuern and Westbahnhof. The driving time between these stations
  is about 7 minutes and the trains have to share the same track in 4
  stations.
\item[Scenario 2:] A disruption occurs between Philadelphiabr\"ucke and
  Erlaaer Stra{\ss}e that are located near the terminal station
  Siebenhirten. Trains have to switch the track in front of the station
  Philadelphiabr\"ucke and back again before arriving at Erlaaer
  Stra{\ss}e. We allow trains to turn at these stations.  The driving time
  is 7 minutes and the trains share the same track in 4 stations. The depot
  Alterlaa is located between those stations.
\item[Scenario 3:] A disruption occurs between Nu{\ss}dorfer Stra{\ss}e and
  Michelbeuern. Trains have to switch shortly after Nu{\ss}dorfer
  Stra{\ss}e and return to their original track before arriving at
  Michelbeuern. Options for turning are given at Michelbeuern and (for
  trains going to Floridsdorf) after Spittelau. The driving time through
  the bottleneck is 3 minutes but points for turning are not close to the
  bottleneck section.
\item[Scenario 4:] Here, a disruption occurs between Alser Stra\ss e and
  Thalia\-stra\ss e, so that trains have to switch the track after
  Michelbeuern and return after Josephst\"adter Stra{\ss}e. The driving
  time between these stations is 5 minutes and the options for turning are
  given at Michelbeuern and Westbahnhof, which are not close to the
  disrupted track section.
\end{description}
For our experiments, we used the IP solver CPLEX 12.1 on a PC with a 3.0
GHz Intel Core2Duo CPU and 2 Gbytes of RAM.

The tables 1 -- 6 are organized by scenarios and cycle frequencies; rows
correspond to disruption times in minutes, indicated in the first column.
Columns~2 shows the number of binaries that correspond to the precedence
constraints and the flow on the activity arcs. All computations were
performed using the reduced network. Column~3 gives the number of general
variables for the delays. The fourth column shows the original number of
trips in the undisrupted timetable during the observed time window.  Column~5 shows the number of trips in the new dispatching timetable, derived with
CPLEX in a maximum runtime of 1800 seconds, while the sixth column provides
the best bound provided by CPLEX.  Column~7 shows the necessary solution
time in seconds, or (in case of delays of 45 minutes or more) 1800 if CPLEX
could not establish optimality of a solution withing 1800 seconds. Column 8
indicates whether optimality could be proved, or the gap with
respect to an upper bound. The final column~9 shows the number of trips in
the disposition timetable, if we stop CPLEX finding a solution in only 60
seconds and succeed in finding at least a feasible solution. For this
computation, the MIP node selection strategy of CPLEX is set to best
estimate search.

\begin{table}[p]
  \footnotesize
  \centering
  \caption{\textsc{Scenario 1: disruption between Michelbeuern and
      Westbahnhof, cycle time of 5 minutes}}
  \begin{tabular*}{\linewidth}{@{\extracolsep{\fill}}lr r r r r r r r@{}}\toprule
    dur & bin & int & $|\mathcal{A}_{dep}|$ & sol & u.bound & time & status  & 60s\\ \midrule
    5   & 628 & 306 &       171             & 165 &  165.00 & 0.25 & opt     & 165\\ 
    10  & 745 &	331 &	    177		    & 171 &  171.00 & 3.41 & opt     & 171\\ 
    15  & 861 & 356 &	    198	            & 183 &  183.00 & 0.87 & opt     & 183 \\ 
    20  & 978 &	381 &       207	            & 189 &  189.00 & 9.42 & opt     & 189 \\ 
    30	& 1215& 431 &  	    231		    & 207 &  207.00 &24.88 & opt &  	207\\
    45	& 1578& 506 &	267		    & 231 &  231.00 &141.56&	opt  &  228\\  
    60	& 950 & 581 &	303		    & 252 &  269.63 &1800  & 7.0\%   & 252 \\ 
    90	&2721 & 731 &	375	            & 276 &  351.14 &1800  &27.2\%   & 	--\\ 
    120	&3528 & 881 &   447	            & --  &  424.62 &1800  & --      & --\\ \bottomrule
  \end{tabular*}
  \label{tab_1}
\end{table}

\begin{table}[p]
  \footnotesize
  \centering
  \caption{\textsc{Scenario 1: disruption between Michelbeuern and Westbahnhof, cycle time of 10 Minutes}}
  \begin{tabular*}{\linewidth}{@{\extracolsep{\fill}}lr r r r r r r r@{}}\toprule
    dur & bin & int & $|\mathcal{A}_{dep}|$ & sol & u.bound &time & status  & 60s\\ \midrule
    5         &	254	     &150	      &  85	                  &85	       &    85.00	 & 0.08	& opt &  		85				\\ 
    10        &	307	     &164	      &92	                    &92	       &92.00	     &0.09	 &opt &  		92				\\ 
    15				&365	     &175	      &97	                    &97	       &97.00	     &0.89	  &opt &  		97				\\  
    20  				&416       &189				&104	                  &104	     &104.00	     &0.13	  &opt &  	104				\\ 
    30			&	524					&	214			&	116										&	110				&110.00	&4.04			&opt &  	110				\\ 
    45			&694				&250				&	133									&	127					&127.00	&231.72		&opt &  	127				\\ 
    60			&	854				&289	      &152	                 &146	        &146.00	 &559.87	 &opt &  140				\\ 
    90			&	1193			&364				&188									&170					&188.00	&1800	     &10.5\% &  170  		\\ 
    120			&1453	      &439	      &224	                &200	        &224.00	 &1800	    &12.0\%    & 197			\\ \bottomrule
  \end{tabular*}
  \label{tab_2}
\end{table}

\begin{table}[p]
  \footnotesize
  \centering
  \caption{\textsc{Scenario 2: disruption between Philadelphia\-br\"ucke and Erlaaer Str., cycle time of 5 Minutes}}
  \begin{tabular*}{\linewidth}{@{\extracolsep{\fill}}lr r r r r r r r@{}}\toprule
    dur & bin & int & $|\mathcal{A}_{dep}|$ & sol & u.bound &time & status  & 60s\\ \midrule
    5					&	819			&			327				&	172							&		162				&	169.00	&1.51	& opt &  162\\ 
    10				&	913			&353						&184							&184					&184.00				&0.28	 & opt & 184\\ 
    15				&1092			&	379						&	196							&183					&183.00		&5.30			&	opt  &	183	\\  
    20				&1231			&405						&208							&192					&192.00		&7.64			& 	opt  &  192 \\ 
    30				&1515			&	457						&	232							&	209					&209.00		&	59.11		& opt & 209 \\ 
    45				&1956			&	535					 &	268							&	236					&243.00		&	1800		&2.9\%			 & 235  \\ 
    60				&2415			&613						&685							&258					&284.31		&1800			&10.2\%      & 252\\ 
    90				&	2721	  &731	         &375	              &276	        &351.14	   &1800	  &27.2\% & 272\\ 
    120  	    &3528	    &881	          &447		          &   --        &424.62	  &1800    	&--   & -- \\ \bottomrule
  \end{tabular*}
  \label{tab_3}
\end{table}

\begin{table}[p]
  \footnotesize
  \centering
  \caption{\textsc{Scenario 2: disruption between Philadelphia\-br\"ucke and Erlaaer Str., cycle time of 10 Minutes}}
  \begin{tabular*}{\linewidth}{@{\extracolsep{\fill}}lr r r r r r r r@{}}\toprule
    dur & bin & int & $|\mathcal{A}_{dep}|$ & sol & u.bound &time & status  & 60s\\ \midrule
    5					&	348			&	158				&	86									&86						&86.00			&0.08	&		opt   &  86  \\
    10			&	411				&177				&92										&92						&92.00		&	0.09	& opt		&  92\\ 	
    15				&473			&184				&98										&98						&	98.00	 &0.14	& opt		&  98\\ 
    20				&511			&203				&104									&104					&104.00		&0.90		&opt &  104\\ 
    30				&623	    &229	      &116	                 &116        &	116.00	  &0.12	   &opt & 116\\
    45				&863		&262				&134										&130				&	130.00		&24.65	& opt &  130\\
    60				&1065			&307			&152										&146				&146.00			&591.15	& opt &  145\\
    90				&1482			&385			&188										&180				&184.87			&1800		& 2.7\%		&  174\\ 
    120					&1917			&463			&224										&	209				&221.00			&1800		&5.7\%		 & 209 \\ \bottomrule
  \end{tabular*}
  \label{tab_4}
\end{table}

\begin{table}[p]
  \footnotesize
  \centering
  \caption{\textsc{Scenario 3: disruption between Nu{\ss}dorfer Stra{\ss}e and Michelbeuern, cycle time of 5 Minutes}}
  \begin{tabular*}{\linewidth}{@{\extracolsep{\fill}}lr r r r r r r r@{}}\toprule
    dur & bin & int & $|\mathcal{A}_{dep}|$ & sol & u.bound &time & status  & 60s\\ \midrule
    5	&786	&431	&224	&224	&224.00&	0.19	&opt	&224 \\ 
    10 &	891 &	462 &	238 &	233 &	233.00	 &0.77	& opt	& 233 \\ 
    15 &	999	&393	&252	&247	&247.00	&2.12	&opt& 	247 \\ 
    20 &	1109 &	524     &266    &261	& 261.00&	3.00 &	opt &	261 \\ 
    30 &	1335 &	568	& 294	&282	&282.00	&158.54 &opt	&283 \\ 
    45 &	1698&	679	&336	&317	&330.00	&1800	&4.1\%	&330 \\ 
    60 &	2061&	772	&378	&352	&372.00	&1800	&5.6\%	&372 \\ 
    90 &	2859&	958	&462	&411	&462.00	&1800	&12.4\%&	462 \\
    120 &	3729&	1114	&546	&467	&546.00	&1800	&16.9\%&	546 \\ \bottomrule
  \end{tabular*}
  \label{tab_5}
\end{table}

\begin{table}[p]
  \footnotesize
  \centering
  \caption{\textsc{Scenario 4: disruption between Alser Stra{\ss}e and Thaliastra{\ss}e, cycle time of 5 Minutes}}
  \begin{tabular*}{\linewidth}{@{\extracolsep{\fill}}lr r r r r r r r@{}}\toprule
    dur & bin & int & $|\mathcal{A}_{dep}|$ & sol & u.bound &time & status  & 60s\\ \midrule
    5 &				798&	416	&224	&224&	224.00&	0.17	& opt	& 224\\ 
    10& 935&	450	&240	&232	&232.00	&3.16	&opt&	232 \\ 
    15&1072	&484	&256	&238	&238.00	&97.90	&opt&	238 \\ 
    20& 1211&	518	&272	&254	&254.00	&71.65&	opt&	272 \\ 
    30&1495&	586	&304	&276	&293.12	& 1800&	6.7\%	&276 \\ 
    45&1936&	688	&352	&316	&343.96	& 1800&	8.8\%	&316 \\ 
    60&2395&	790	&425	&351	&398.00	& 1800&	13.6\%	&338 \\ 
    90&3367&	994	&496	&419	&495.22	& 1800&	18.1\%	& -- \\ 
    120&4441&	1198&	592	&481	&592.00	& 1800&	23.0\%	& -- \\ \bottomrule
  \end{tabular*}
  \label{tab_6}
\end{table}

\subsection{Observations}

We were able to find feasible solutions for most of the instances,
including the relatively difficult scenarios with long disruption times. In
most cases, these solutions were achieved quickly, while the bulk of the
work was invested in establishing optimality. This indicates that our
method should be suitable in even larger real-life situations in which a
fast solution is needed and proving its optimality is a secondary concern.
As described above, the LP relaxation is weak. Indeed, it has a large gap
compared to the IP formulation. As a consequence the upper bounds are
improved late during the solution process. If these bounds become more
important, it may be of interest to establish additional inequalities.

\begin{figure}
  \centering
  \includegraphics[width=.6\linewidth]{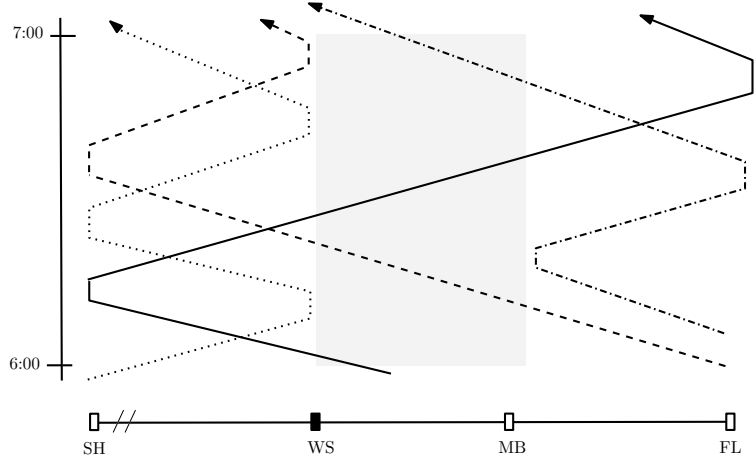}
  \caption{{An excerpt of the time and space diagram of a new dispatching timetable. The diagram shows 4 trains circulating between the
      terminal stations Siebenhirten (SH) and Floridsdorf (FL). Due to a disruption between 6.00 a.m.\ and 7.00
      a.m.\ only one track between Westbahnhof (WS) and
      Michelbeuern (MB) can be used. Thus, the schedules of the trains represented by the dashed and dotted lines become modified.  
}
}
  \label{time_space}
\end{figure}

For practical purposes, the actual structure of the resulting vehicle
schedules is of interest. An excerpt of a time and space diagram for the re-optimized
timetable and vehicle schedule for a disruption of 60 minutes in Scenario~1
is shown in Figure~\ref{time_space}. 

The utilization of the tracks is generally high. But in the computed
solutions, trains are still passing through the bottleneck section. The
frequency, however, is heavily decreased.  The new vehicle schedule
contains 20 early turn operations. All of the three replacement trains are
used, in order to realize the new timetable.

\subsection{Extensions}

Every change in the vehicle schedule needs to be distributed quickly to the
involved persons. Furthermore, truncated circulations, caused by early
turns or vehicles returning early to a depot, lead to necessary transfers
for the passengers. Thus, it is desired to keep the number of transfers in
the vehicle schedule reasonably low. To achieve this, we add a second term to the objective function by
penalizing early turns and early returns to a depot:
\begin{equation}\label{ip_o2}
  \max \sum_{v\in \Edep}\sum_{a\in \delta^+(v)}c_a\, y_{a} -
  \sum_{b\in\Aturn\cup\Areturn}c_b\, y_{b}.
\end{equation}
Each unplanned turning or early return to a depot penalizes the objective
value by~$c_b$. This results in a trade-off between the number of recovered
weighted trips and the transfers for passengers.

In first experiments, we were able to find solutions with significantly
less transfers for the passengers, but only a few more canceled trips. Fine
tuning the weight coefficients $c_a$ and $c_b$ is important, since they
have big impact on the design of the circulations provided by solution.
%

We are optimistic that the interaction with practitioners may help in
designing better models, with preferences for particular solution types.
For example, if the minimal transfer time of the disrupted section is quite
small compared to the cycle frequency, delaying the affected trips could be
sufficient. Furthermore, computed solutions often have an alternating
structure: the disposition timetable often builds clusters of trips (with
respect to their safety distance) and let these clusters alternate through
the bottleneck. On the other hand, if the transfer time is too large, just
delaying trips is not sufficient, trips have to be dropped and the
solutions often contain shuttle trains. Finally, allowing trains to turn
within the bottleneck yields some good solutions.

\section{Conclusions}

We have introduced techniques for integrated rescheduling of trips and
vehicles for real-time disruption management of rail-based public
transportation systems, especially for subway systems.  Using an IP formulation
and appropriate reduction techniques, we were able to achieve very good
solutions for a variety of test scenarios arising from a real-world subway
line. Tests show that our feasible solutions are always optimal or close to
being optimal, indicating the practical usefulness of our method.

The spectrum of further improvements includes fine-tuning of our IP/LP
approach and exploiting the structure of the underlying networks to reduce the
solution space in advance. We would like to deal with larger-scale
networks, for examples (subway) systems that do not have separated track
system for each line.


\section*{Acknowledgment}
\noindent
Martin Lorek gratefully acknowledges support from Siemens AG by a research
fellowship. We thank the members of the Rail Automation Department, Mobility
Division of Siemens AG, in particular York Schmidtke, Karl-Heinz Erhard and Gerhard Ruhl,
for many helpful conversations and an ongoing fruitful collaboration,
and for the use of Falko.

\bibliographystyle{asmems4}
\bibliography{paper}

\end{document}